\newtheorem{thm}{Theorem}
\newtheorem*{thm*}{Theorem}
\newtheorem{cor}[thm]{Corollary}
\newtheorem{lem}[thm]{Lemma}
\theoremstyle{definition}
\newtheorem*{defn*}{Definition}
\newcommand{\R}{\mathbb R}
\newcommand{\N}{\mathbb N}
\def\ce{Ces\`{a}ro }
\def\ces{\mathcal{C}}
\def\linf{L^\infty}
\def\elp{L^p([0,1])}
\def\im{I_{m}}
\def\lm{{L^1(m)}}
\begin{document}

\title[The weak Banach-Saks property]
{The weak Banach-Saks   property \\ for function spaces}

\author{Guillermo P. Curbera}
\address{Facultad de Matem\'aticas \& IMUS,  Universidad de Sevilla,
Aptdo.  1160,  Sevilla 41080, Spain}
\email{curbera@us.es}

\author{Werner J. Ricker}
\address{Math.--Geogr. Fakult\"at, Katholische Universit\"at
Eichst\"att--Ingolstadt, D--85072 Eichst\"att, Germany}
\email{werner.ricker@ku.de}

\thanks{The first author acknowledges the support  of the
\lq\lq International Visiting Professor Program 2015\rq\rq\  from the
Ministry of Education, Science and Art, Bavaria (Germany), and of
MTM2012-36732-C03-03, MINECO (Spain).}

\date{\today}

\subjclass[2010]{Primary 46E30, 46B20, 46G10; Secondary 46B08}

\keywords{Weak Banach-Saks property, subsequence splitting property,
Banach function space, optimal domain, vector measure,
ultraproduct}

\begin{abstract}
We establish the weak Banach-Saks property for function spaces
arising as the optimal domain of an  operator.
\end{abstract}

\maketitle


\section*{Introduction}


Astashkin and Maligranda  proved that the Banach function space (B.f.s.)
$$
Ces_p[0,1]:=\left\{f: x\mapsto \frac{1}{x}\int_0^x|f(y)|\,dy\in
L^p([0,1])\right\} ,\quad 1\le p<\infty,
$$
has the \textit{weak Banach-Saks property}, namely, every
weakly null sequence in $Ces_p[0,1]$ admits a subsequence whose arithmetic means converge
to zero in the norm of $Ces_p[0,1]$, \cite[\S7]{astashkin-maligranda1}.
The space $\elp$, for $1\le p<\infty$, itself has the weak Banach-Saks property.
This is due to Banach and Saks for $1<p<\infty$, \cite{banach-saks},
and to Szlenk for $p=1$, \cite{szlenk}. An important step in the proof  of the
above result in \cite{astashkin-maligranda1} is to first establish that $Ces_p[0,1]$ satisfies
the  \textit{subsequence splitting property}.
This property goes back to a celebrated paper of Kadec and Pe\l czy\'nski, \cite{kadec-pelczynski}, where
they observed  that in $\elp$, $1\le p<\infty$, every norm bounded sequence $\{f_n\}$
has a subsequence $\{f'_{n}\}$ that can be split in the form
$f'_{n}=g_n+h_n$,
where the functions $\{h_n\}$ have pairwise disjoint support and the sequence
$\{g_n\}$ has uniformly absolutely continuous (a.c.)\ norm in $\elp$, that is,
$\sup_n\|g_n\chi_A\|_p\to0$ when $\lambda(A)\to0$, where $\lambda$ is Lebesgue measure on $[0,1]$.
Characterizations of the subsequence splitting property
(in terms of ultraproducts) are due to Weis, \cite{weis}; they play a crucial role in
Section \ref{S-ssp}.

The above results raise the question of whether the
subsequence splitting property and the weak Banach-Saks property are also satisfied
in analogous  B.f.s.',  such as, for example,
\begin{equation}\label{T}
\Big\{f\in L^1(G): \nu*|f|\in L^p(G)\Big\},\quad 1< p<\infty,
\end{equation}
where $\nu$ is a positive, finite Borel measure on a compact abelian group $G$;
or for
\begin{equation}\label{I}
\bigg\{f\colon[0,1]\to\R: I_\alpha(f)(x):=\int_0^1\frac{|f(y)|}{|x-y|^{1-\alpha}}\,dy\in X\bigg\},
\end{equation}
where $0<\alpha<1$ and $X$  is a rearrangement invariant (r.i.) space on $[0,1]$; or for
\begin{equation}\label{S}
\bigg\{f\colon[0,1]\to\R: T(f)(x):=\int_x^1 y^{(1/n)-1}|f(y)|\,dy\in X\bigg\},
\end{equation}
where $n\ge2$ and $X$  is a r.i.\  space on $[0,1]$.

The  common feature for  these types of B.f.s.'\   is that each one is the \textit{optimal extension domain}
of an appropriate  linear operator. Indeed, in the case of $Ces_p[0,1]$ this is so for the \ce operator
$$
f\mapsto \ces(f): x\mapsto \frac{1}{x}\int_0^x f(y)\,dy;
$$
see \cite{astashkin-maligranda1}; in the case of the B.f.s.\  in \eqref{T} for the operator of
convolution with the measure $\nu$, that is, for
$$
f\mapsto T_\nu(f)=f*\nu:x\mapsto\int_G f(y-x)\,d\nu(y);
$$
see \cite{okada-ricker2}; in the case of the B.f.s.\ in \eqref{I} for the
Riemann-Liouville fractional integral of order $\alpha$, that is, for
$$
f\mapsto I_\alpha(f):x\mapsto\int_0^1\frac{f(y)}{|x-y|^{1-\alpha}}\,dy ;
$$
see \cite{curbera-ricker1}; and in the case of the B.f.s.\ in \eqref{S} for
the kernel operator associated to the $n$-dimensional Sobolev
inequality, that is, for
$$
f\mapsto T_n(f):x\mapsto\int_x^1y^{(1/n)-1} f(y)\,dy ;
$$
see \cite{curbera-ricker2}, \cite{edmunds-kerman-pick}.

Concerning the optimal domain of an operator, consider
a finite measure space  $(\Omega,\Sigma,\mu)$, a
Banach space $X$ and  an  $X$-valued linear map $T$ defined on a
vector subspace $\mathcal{D}\subseteq L^0(\mu)$ which contains
$L^\infty(\mu)$. Here $L^0(\mu)$ is the space of classes of all a.e.\
$\R$-valued, measurable functions defined on $\Omega$.
Then the optimal domain for $T$, taking its values in $X$,
is  the linear space  defined by
\begin{equation*}
[T,X]:=\Big\{f\in L^0(\mu): T(|f|)\in X\Big\},
\end{equation*}
which becomes a B.f.s.\ when endowed with the norm
$$
\|f\|_{[T,X]}:=\|T(|f|)\|_X,\quad f\in[T,X].
$$
In this notation,
we have $Ces_p=[\ces,L^p([0,1])]$, the B.f.s.\  in \eqref{T} is
$[T_\nu,L^p(G)]$, the B.f.s.\  in \eqref{I} is $[I_\alpha,X]$,
and the B.f.s.\  in \eqref{S} is $[T_n,X]$.

The aim of this paper is to extend the above mentioned results  of Astashkin
and Maligranda to the setting of operators other than the \ce operator and B.f.s.'\
other than    $\elp$. For this we need to
determine  conditions on the Banach space
$X$ and on the operator $T$ which guarantee that the space $[T,X]$
has the subsequence splitting property
and the weak Banach-Saks property. This is achieved
in  Theorems  \ref{theorem4}, \ref{theorem5} and \ref{theorem6}. The combination
of these theorems leads to the following result.

Recall that a linear operator between Banach B.f.s.'\ is said to be \textit{positive} if it maps
positive functions to positive functions, in which case it is automatically continuous.

\begin{thm}\label{theorem1}
Let $(\Omega,\Sigma,\mu)$ be a separable, finite measure space,
$X$ be   a B.f.s.\  which  possesses  the weak Banach-Saks property and
such that both $X$ and $X^*$ have the subsequence splitting property. Let
$T\colon L^\infty(\mu)\to X$ be a positive, linear operator.
Then, the B.f.s.\  $[T,X]$ has both the subsequence splitting property and the weak Banach-Saks property.
\end{thm}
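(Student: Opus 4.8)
The plan is to obtain Theorem~\ref{theorem1} as a direct combination of the three technical theorems announced in the excerpt, so my task is really to verify that the hypotheses line up. First I would observe that $[T,X]$ is a genuine B.f.s.: since $\mu$ is finite, $L^\infty(\mu)$ is the natural domain, and positivity of $T$ forces continuity, so $\|f\|_{[T,X]}=\|T(|f|)\|_X$ is a well-defined, order-continuous-friendly norm. The strategy is then to feed the space $[T,X]$ into Theorem~\ref{theorem4} (for the subsequence splitting property of $[T,X]$ itself) and subsequently into Theorems~\ref{theorem5} and~\ref{theorem6} (which together should yield the weak Banach-Saks property), using the hypotheses on $X$ as the inputs those theorems require.

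The key point I would have to check is the transfer of structural properties from $X$ to $[T,X]$. The hypotheses assume that $X$ has the weak Banach-Saks property and that both $X$ and $X^*$ have the subsequence splitting property; these are precisely the data one expects Theorems~\ref{theorem4}--\ref{theorem6} to consume. I would therefore argue in two stages. In the first stage I invoke Theorem~\ref{theorem4} to conclude that $[T,X]$ inherits the subsequence splitting property from that of $X$ and $X^*$, together with positivity and separability of $\mu$ (the latter guaranteeing, via the ultraproduct characterizations of Weis alluded to in the introduction, that ultrapowers of $[T,X]$ behave well). In the second stage, having the subsequence splitting property for $[T,X]$ in hand, I combine it with the weak Banach-Saks property of $X$ to derive the weak Banach-Saks property of $[T,X]$ through Theorems~\ref{theorem5} and~\ref{theorem6}.

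The mechanism I would anticipate is the Kadec--Pe\l czy\'nski splitting described in the introduction: a norm-bounded sequence $\{f_n\}$ in $[T,X]$ has a subsequence $f'_n=g_n+h_n$, where the $h_n$ have pairwise disjoint supports and the $g_n$ have uniformly absolutely continuous norm. For a weakly null sequence, the disjointly supported part contributes arithmetic means that vanish by the disjointness-and-boundedness estimate characteristic of B.f.s.', while the uniformly a.c.\ part is handled by transporting the weak Banach-Saks behaviour of $X$ across the operator $T$. Passing to a common subsequence and adding the two estimates should force the Ces\`aro means of $f'_n$ to zero in norm, which is exactly the weak Banach-Saks property.

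The hard part will be the transfer across $T$ in the second stage: unlike $X$, the space $[T,X]$ is defined only through $T(|f|)$, so linearity interacts awkwardly with the modulus, and one cannot simply push a weakly null sequence in $[T,X]$ forward to a weakly null sequence in $X$. I expect the role of $X^*$ having the subsequence splitting property, and of the ultraproduct formulation, is precisely to circumvent this obstruction — allowing the uniformly a.c.\ part to be controlled in $X$ after applying $T$, even though $T$ does not act isometrically or linearly on moduli. Verifying that the separability of $\mu$ and the two-sided splitting hypotheses are exactly what Theorems~\ref{theorem4}--\ref{theorem6} demand, and that no additional order-continuity or reflexivity assumption sneaks in, is the delicate bookkeeping I would carry out before declaring the combination complete.
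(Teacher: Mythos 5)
Your high-level strategy (deduce Theorem~\ref{theorem1} from Theorems~\ref{theorem4}--\ref{theorem6}) matches the paper's, but there is a genuine gap: Theorems~\ref{theorem4}--\ref{theorem6} are statements about the space $L^1(m)$ of a $\sigma$-additive \emph{vector measure} $m\colon\Sigma\to X$, not about an arbitrary optimal domain $[T,X]$. You cannot ``feed the space $[T,X]$ into Theorem~\ref{theorem4}''; you must first produce a vector measure to which those theorems apply and then relate its $L^1$-space to $[T,X]$. The paper does exactly this: it defines $m(A):=T(\chi_A)$, proves $\sigma$-additivity (the finitely additive set function $m$ is $\sigma$-additive because $T$ is positive and $X$ has a.c.\ norm, the latter being a consequence of the subsequence splitting property), proves that $m$ is \emph{separable} (from the separability of $\mu$ together with the absolute continuity of $m$ with respect to $\mu$), proves that the range $m(\Sigma)$ has \emph{uniformly a.c.\ norm} in $X$ (from $0\le T(\chi_A)\le T(\chi_\Omega)$ and the a.c.\ norm of the single function $T(\chi_\Omega)$), and finally invokes the identification $[T,X]=L^1(m)$ (a known fact for optimal domains over spaces with a.c.\ norm, \cite[Corollary 3.3]{curbera-ricker1}). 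Your proposal contains none of these steps: no vector measure is constructed, the hypotheses (c) and (d) of Theorem~\ref{theorem6} are never verified, and the identification of $[T,X]$ with an $L^1(m)$-space --- the bridge without which the whole deduction fails --- is absent.

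Relatedly, the ``hard part'' you flag (that $T$ interacts awkwardly with the modulus, so weakly null sequences in $[T,X]$ cannot be pushed into $X$) is resolved precisely by this bridge, not by the subsequence splitting property of $X^*$ or ultraproducts as you speculate: once $[T,X]=L^1(m)$, the continuous linear integration operator $f\mapsto\int_A f\,dm$ maps weakly null sequences in $L^1(m)$ to weakly null sequences in $X$, which is how the weak Banach-Saks property of $X$ gets transferred inside the proof of Theorem~\ref{theorem5}. The Kadec--Pe\l czy\'nski splitting mechanism you describe in your third paragraph is a sketch of the proof of Theorem~\ref{theorem5} itself; it is not part of the proof of Theorem~\ref{theorem1}, whose entire content is the construction and verification described above.
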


Given a measurable function $K\colon(x,y)\in[0,1]\times[0,1]\mapsto K(x,y)\in[0,\infty]$,
recall that the associated kernel operator $T_K$ is  defined  by
\begin{equation}\label{tk}
T_Kf(x):=\int_0^1K(x,y)f(y)\,dy,\quad x\in[0,1],
\end{equation}
for any measurable function $f$  for which it is meaningful to do so.

As a consequence of Theorem \ref{theorem1} we have the following result.

\begin{cor}\label{corollary2}
Let $X$ be a  r.i.\ space on $[0,1]$   which  possesses  the weak Banach-Saks property and
such that both $X$ and $X^*$ have the subsequence splitting property.
Let $K\colon(x,y)\in[0,1]\times[0,1]\mapsto K(x,y)\in[0,\infty]$
be a measurable kernel such that $T_K(\chi_{[0,1]})\in X$, where
$T_K$ is as in \eqref{tk}.
Then,  the B.f.s.\  $[T_K,X]$ has both the subsequence splitting property and
the Banach-Saks property.

In particular, the result holds whenever $X$ is reflexive and
possesses  the weak Banach-Saks property.
\end{cor}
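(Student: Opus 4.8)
The plan is to obtain the corollary from Theorem \ref{theorem1} and then to \emph{upgrade} the resulting weak Banach--Saks property to the full Banach--Saks property by proving reflexivity. First I would verify the hypotheses of Theorem \ref{theorem1} with $(\Omega,\Sigma,\mu)=([0,1],\mathcal{B},\lambda)$, which is separable and finite. The restriction $T_K\colon\linf([0,1])\to X$ is well defined, linear and positive: for $0\le f\in\linf$ one has $0\le T_Kf$ since $K\ge0$, while the pointwise bound $T_Kf(x)\le\|f\|_\infty\,T_K(\chi_{[0,1]})(x)$ together with $T_K(\chi_{[0,1]})\in X$ and the ideal property of the r.i.\ space $X$ gives $T_Kf\in X$; positivity then forces continuity. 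Hence Theorem \ref{theorem1} applies and shows that $E:=[T_K,X]$ has the subsequence splitting property and the weak Banach--Saks property. This already settles the splitting assertion, so the remaining task is the full Banach--Saks property.

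For this I would invoke the classical equivalence that a Banach space has the Banach--Saks property if and only if it is reflexive and has the weak Banach--Saks property. The implication from the Banach--Saks property to reflexivity is classical (Nishiura--Waterman), while the converse is immediate: given a bounded sequence in a reflexive space, pass to a weakly convergent subsequence $f_{n_k}\rightharpoonup f$, apply the weak Banach--Saks property to the weakly null sequence $f_{n_k}-f$, and note that the arithmetic means of the original subsequence then converge in norm to $f$. Since $E$ already enjoys the weak Banach--Saks property, it suffices to prove that $E$ is reflexive.

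Establishing the reflexivity of $E$ is the heart of the matter, and the step I expect to be the main obstacle. I would use the standard criterion that a B.f.s.\ with the Fatou property is reflexive if and only if both $E$ and its associate space $E'$ have order continuous norm. The idea is to read the two splitting hypotheses as order--continuity statements and transport them through the optimal--domain construction: the subsequence splitting property of $X$ should yield order continuity of the norm of $E$, and the subsequence splitting property of $X^*$ should yield order continuity of the norm of $E'$ (equivalently, of $E^*$). Here I would aim to invoke Weis's ultraproduct characterization of the subsequence splitting property \cite{weis} in order to convert the disjoint/uniformly absolutely continuous decompositions available in $X$ and in $X^*$ into the corresponding decompositions in $E$ and $E^*$; the delicate point is the duality bookkeeping relating $E^*$ to an optimal domain built from $X^*$, and it is precisely here that I expect the real work to lie. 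Granting order continuity of both $E$ and $E'$, reflexivity of $E$ follows, and combined with the preceding paragraph this yields the Banach--Saks property for $E=[T_K,X]$.

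Finally, for the \lq\lq in particular\rq\rq\ clause I would argue that if $X$ is reflexive, then both $X$ and $X^*$ are reflexive r.i.\ spaces, hence have order continuous norm, and a reflexive r.i.\ space has the subsequence splitting property (extract a weakly convergent subsequence and disjointify as in Kadec--Pe\l czy\'nski \cite{kadec-pelczynski}, or invoke \cite{weis}). Thus $X$ and $X^*$ satisfy the splitting hypotheses of the main statement, and the conclusion just established then applies to give both the subsequence splitting property and the Banach--Saks property for $[T_K,X]$.
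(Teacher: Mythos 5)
Your first paragraph is, in essence, the paper's entire proof: one verifies the hypotheses of Theorem \ref{theorem1} for $(\Omega,\Sigma,\mu)=([0,1],\mathcal{M},\lambda)$ (separable, finite), notes that $T_K$ is linear and positive since $K\ge0$, uses the bound $|T_Kf|\le\|f\|_\infty\,T_K(\chi_{[0,1]})$ together with $T_K(\chi_{[0,1]})\in X$ and the ideal property to conclude $T_K\colon\linf\to X$, and, for the \lq\lq in particular\rq\rq\ clause, observes that if $X$ is reflexive then neither $X$ nor $X^*$ contains $c_0$, whence both have the subsequence splitting property by \cite[2.6 Corollary]{weis}. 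All of that is correct and complete. Where you part ways with the paper is in taking the word \lq\lq Banach-Saks\rq\rq\ in the statement literally: it is a misprint for \lq\lq weak Banach-Saks\rq\rq. The abstract, Theorem \ref{theorem1} and Theorem \ref{theorem6} all conclude only the \emph{weak} Banach-Saks property, and the paper's proof of the corollary does nothing beyond invoking Theorem \ref{theorem1}; no upgrading step exists or is needed.

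More importantly, the upgrading you propose cannot be carried out, because the literal statement is false: $[T_K,X]$ is in general \emph{not} reflexive even when $X$ is. The paper's flagship example is a counterexample. Take $X=L^p([0,1])$ with $1<p<\infty$ (reflexive, with the weak Banach-Saks property) and $T_K=\ces$ the Ces\`aro operator; then $[T_K,X]=Ces_p[0,1]$, which contains an (asymptotically isometric) copy of $\ell^1$ and hence is non-reflexive \cite{astashkin-maligranda1}. Since, as you yourself note via Nishiura--Waterman, the Banach-Saks property implies reflexivity, $Ces_p[0,1]$ fails the full Banach-Saks property, so no proof of it from these hypotheses can exist. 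The concrete step in your plan that breaks is \lq\lq the subsequence splitting property of $X^*$ yields order continuity of $E'$\rq\rq\ for $E=[T_K,X]$: indeed $E=\lm$ always has a.c.\ norm, and $Ces_p[0,1]$ has the Fatou property, so order continuity of $E'$ would give $E^{**}=(E')'=E''=E$, i.e.\ reflexivity --- which fails; equivalently, the associate space of $Ces_p[0,1]$ does not have a.c.\ norm, and no duality bookkeeping through the optimal-domain construction can produce it. The fix is simply to read (and prove) the corollary with \lq\lq weak Banach-Saks\rq\rq, for which your first paragraph, identical in substance to the paper's proof, already suffices.
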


From Corollary \ref{corollary2} it follows, for example,
that the B.f.s\  $[I_\alpha,X]$ in \eqref{I} corresponding to
the kernel $K(x,y):=|x-y|^{\alpha-1}$, the B.f.s.\ $[T_n,X]$ in \eqref{S} generated
by the Sobolev kernel $K(x,y)=y^{(1/n)-1}\chi_{[x,1]}(y)$, and
the \ce space $[\ces,X]$ corresponding to the kernel $K(x,y):=(1/x)\chi_{[0,x]}(y)$,
all have the subsequence splitting property and
the Banach-Saks property, whenever $X$ is a  reflexive r.i.\ space with the weak Banach-Saks property.
We refer to  Section 4 for the details and further examples, also including convolution operators by measures.

A comment regarding the techniques is in order. There is a (somewhat
unexpected, although classical) tool available for treating optimal domains
in a  unified way: there always exists an  underlying vector measure associated with the operator together with
its corresponding $L^1$-space  consisting of all  the scalar functions which are integrable
 with respect to that vector measure (in the sense of Bartle, Dunford and Schwartz).
Accordingly, Theorems  \ref{theorem4} and \ref{theorem5} are formulated for  the subsequence splitting property
and the weak Banach-Saks property for $L^1$-spaces  of a general vector measure,  respectively. For instance,
in the case of the \ce operator, the associated $L^p([0,1])$-valued vector measure is given by
\begin{equation*}
m_{L^p}:A\mapsto m_{L^p}(A):=\mathcal{C}(\chi_A),
\quad A\subseteq [0,1] \text{ measurable}.
\end{equation*}
For this vector measure it turns out that  $Ces_p[0,1]=L^1(m_{L^p})$.


\section{Preliminaries}\label{S-prelim}


A \textit{Banach function space} (B.f.s.)\ $X$ over a measure space
$(\Omega,\Sigma,\mu)$  is a
Banach space  of classes of measurable functions on $\Omega$ satisfying
the ideal property, that is, $g\in X$ and $\|g\|_X\le\|f\|_X$
whenever $f\in X$ and $|g|\le|f|$ $\mu$--a.e. We denote by $X^+$
the cone in $X$ consisting in all $f\in X$ satisfying $f\ge0$ $\mu$-a.e.
The B.f.s.\ $X$ has \textit{absolutely continuous} (a.c.)\  norm
if $\lim_{\mu(A)\to0}\|f\chi_A\|_X=0$ for $f\in X$; here $\chi_A$
denotes the characteristic function of a set $A\in\Sigma$. An equivalent
condition is that order bounded, increasing sequences in $X$ are norm convergent.
A subset $K\subseteq X$
is said to have \textit{uniformly a.c.}\ norm if $\lim_{\mu(A)\to0}\sup_{f\in K}\|f\chi_A\|_X=0$.
Sets with uniform a.c.\ norm are also called almost order bounded sets or L-weakly
compact sets. In B.f.s.' with a.c.\ norm, all relatively compact sets have
uniform a.c.\ norm, and all sets with  uniform a.c.\ norm are relatively
weakly compact; see \cite[\S3.6]{meyer-nieberg}.

A \textit{rearrangement invariant} (r.i.) space $X$ on $[0,1]$ is a
B.f.s.\  on $[0,1]$ such that if $g^*\le f^*$ and $f\in X$,  then $g\in X$ and $\|g\|_X\le\|f\|_X$.
Here $f^*$ is the decreasing rearrangement of $f$, that is, the
right continuous inverse of its distribution function:
$\mu_f(\tau):=\mu(\{t\in [0,1]:\,|f(t)|>\tau\})$.
If a r.i.\ space $X$ has a.c.\ norm, then  the dual space $X^*$  is again  r.i.
A r.i.\ space $X$ always satisfies $\linf\subseteq X\subseteq L^1$.

We recall briefly the theory of integration of real functions with respect
to a vector measure, due to Bartle, Dunford and  Schwartz, \cite{bartle-dunford-schwartz}.
Let $(\Omega ,\Sigma)$ be a measurable space,
$X$ be a Banach space with dual space
$X^*$ and closed unit ball $B_{X^*}$, and $m\colon\Sigma\to X$ be a $\sigma$-additive vector measure.
The \textit{semivariation} of $m$ is defined by
$$
A\mapsto\|m\|(A) := \sup\{ |x^*m|(A) : x^*\in B_{X^*} \},\quad A\in\Sigma,
$$
where $|x^*m|$ is the variation measure of the scalar measure $x^*m:
A\mapsto\langle x^*,m(A)\rangle$ for $A\in\Sigma$.
A \textit{Rybakov control measure} for $m$ is a measure of the form $\eta=|x_0^*m|$
for a suitable element $x_0^*\in X^*$ such that $\eta(A)=0$ if and only if $\|m\|(A)=0$;
see \cite[Theorem IX.2.2]{diestel-uhl}.

A measurable function $f\colon\Omega\to\R$ is called \textit{$m$--scalarly
integrable} if $f\in L^1(|x^*m|)$, for
every $x^*\in X^*$. The function $f$ is
\textit{$m$--integrable}  if, in addition, for each $A\in\Sigma$ there exists a
vector in $X$ (denoted by $\int_Af\,dm$) such that
$\langle\int_Af\,dm,x^*\rangle=\int_Af\,dx^*m$, for every
$x^*\in X^*$.   The $m$--integrable functions
form a linear space in which
$$
\|f\|_{\lm} : =\sup\bigg\{\int_\Omega |f|\,d|x^*m| : x^*\in B_{X^*}\bigg\}
$$
is a seminorm. Identifying functions which differ  $\|m\|$--a.e.,
we obtain a Banach space (of classes) of $m$--integrable
functions, denoted by $\lm$. It is a B.f.s.\ over $(\Omega,\Sigma,\eta)$
relative to any Rybakov control measure $\eta$ for $m$.
Simple functions are dense in $\lm$, the $m$--essentially bounded functions are
contained in $\lm$, and $\lm$ has  a.c.\ norm.
This last property implies that $\lm^*$ can be identified with its associate space, that is,
with the space of all measurable functions $g$ satisfying
$fg\in L^1(\eta)$ for all $f\in\lm$; the identification
is given by $f\in\lm\mapsto\int_\Omega fg\,d\eta\in\R$.
In particular, $L^\infty\subseteq\lm^*$.
An equivalent norm for $\lm$ is given by
$|||f||| :=\sup\{\| \int_A f\, dm\| _X : A\in\Sigma\}$, which satisfies
$|||f|||\le ||f||_{\lm}\le 2 |||f|||$ for $f\in\lm$.
The integration operator $\im\colon\lm\to X$ is defined by $f\mapsto\int_\Omega f\,dm$. It is
continuous, linear and has operator norm  one. It should be noted 
that the spaces $\lm$ can be quite different to the
classical $L^1$--spaces of scalar measures.  Indeed, every Banach
lattice with a.c.\ norm and having a weak unit (e.g., $L^2([0,1])$)
is the $L^1$--space of some vector measure, \cite[Theorem 8]{curbera1}.

For further details concerning
B.f.s.'\  and r.i.\ spaces we refer to
\cite{lindenstrauss-tzafriri}. For further facts related to the spaces
$\lm$ see \cite{okada-ricker-sanchez}.

The following result is implicit in the construction
of the Bartle, Dunford, Schwartz integral (cf.\  the proof
of Theorem 2.6(a) of \cite{bartle-dunford-schwartz}), although it is  not
explicitly stated. We include a proof for the sake of completeness.

\begin{lem}\label{lewis}
Let $\{f_n\}\subseteq \lm$ be a sequence satisfying
\begin{itemize}
\item[(a)] $f_n(x)\to f(x)$ for $||m||$-a.e.\ $x\in\Omega$, and
\item[(b)] $\left\{\int_A f_n\,dm\right\}$ is convergent in $X$, for each $A\in\Sigma$.
\end{itemize}
Then, $f\in\lm$ and $\{f_n\}$ converges to $f$ in the norm of $\lm$.
\end{lem}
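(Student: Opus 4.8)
The plan is to deduce uniform absolute continuity of the norms of $\{f_n\}$ in $\lm$ from the setwise convergence in (b), and then to run a Vitali-type convergence argument exploiting the pointwise convergence in (a). I would fix a Rybakov control measure $\eta=|x_0^*m|$ for $m$, scaled so that $x_0^*\in B_{X^*}$; then $\eta$ and the semivariation $\|m\|$ have the same null sets, so that ``$\|m\|$-a.e.'' and ``$\eta$-a.e.''\ coincide. For each $n$ consider the indefinite integral $\gamma_n\colon A\mapsto\int_A f_n\,dm$, a $\sigma$-additive $X$-valued measure. Since $\lm$ has a.c.\ norm and $\|\gamma_n(A)\|_X\le\|f_n\chi_A\|_{\lm}$, each $\gamma_n$ is $\|m\|$-continuous, hence $\eta$-continuous; and hypothesis (b) says exactly that $\gamma_n(A)$ converges in $X$ for every $A\in\Sigma$.

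First I would apply the vector-valued Vitali--Hahn--Saks (Nikod\'ym) theorem to $\{\gamma_n\}$: since each $\gamma_n\ll\eta$ and the setwise limit exists, the family $\{\gamma_n\}$ is uniformly $\eta$-continuous, i.e.\ $\sup_n\|\gamma_n(A)\|_X\to0$ as $\eta(A)\to0$. If $\eta(A)<\delta$ then $\eta(B)<\delta$ for every $B\subseteq A$, so in fact $\sup_n\sup_{B\subseteq A}\|\int_B f_n\,dm\|_X\to0$ as $\eta(A)\to0$; that is, $\sup_n|||f_n\chi_A|||\to0$. By the equivalence $|||\cdot|||\le\|\cdot\|_{\lm}\le2|||\cdot|||$ this gives $\sup_n\|f_n\chi_A\|_{\lm}\to0$ as $\eta(A)\to0$, so $\{f_n\}$ has uniformly a.c.\ norm in $\lm$. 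I expect this step---upgrading the scalarwise information in (b) to uniformity over the whole ball $B_{X^*}$---to be the main obstacle; it is precisely what the vector form of Vitali--Hahn--Saks delivers, whereas applying the scalar theorem to each $|x^*m|$ separately would leave the uniformity in $x^*$ unresolved.

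With uniform a.c.\ norm in hand I would conclude by a Vitali/Egorov argument. Given $\varepsilon>0$, choose $\delta>0$ as above and, by Egorov's theorem on the finite space $(\Omega,\Sigma,\eta)$, a set $E$ with $\eta(\Omega\setminus E)<\delta$ on which $f_n\to f$ uniformly. For indices $n,k$ split
$$
\|f_n-f_k\|_{\lm}\le\|(f_n-f_k)\chi_E\|_{\lm}+\|f_n\chi_{\Omega\setminus E}\|_{\lm}+\|f_k\chi_{\Omega\setminus E}\|_{\lm}.
$$
The last two terms are each below $\varepsilon$ by uniform a.c.\ norm, while the first is at most $(\sup_{t\in E}|f_n(t)-f_k(t)|)\,\|m\|(E)$, which tends to $0$ as $n,k\to\infty$ because $\{f_n\}$ is uniformly Cauchy on $E$. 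Hence $\{f_n\}$ is Cauchy in $\lm$, and by completeness converges to some $g\in\lm$.

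It remains to identify $g$ with $f$. Since $x_0^*\in B_{X^*}$, one has $\|h\|_{L^1(\eta)}\le\|h\|_{\lm}$ for every $h\in\lm$, so $f_n\to g$ in $L^1(\eta)$, hence in $\eta$-measure, and a subsequence converges to $g$ $\eta$-a.e. Comparing with (a), which gives $f_n\to f$ $\eta$-a.e., forces $g=f$ $\|m\|$-a.e. Thus $f\in\lm$ and $f_n\to f$ in the norm of $\lm$, as required.
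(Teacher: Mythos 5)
Your proof is correct, and it shares with the paper the decisive first step: both apply the vector-valued Vitali--Hahn--Saks theorem (the paper cites Theorem I.5.6 of Diestel--Uhl) to the indefinite integrals $\gamma_n\colon A\mapsto\int_A f_n\,dm$, converting the setwise convergence in (b) into uniformity in $n$. After that the two arguments part ways. The paper delegates the rest to the Bartle--Dunford--Schwartz convergence theorem (their Theorem 2.7, a Vitali-type theorem for the integral with respect to a vector measure): uniform absolute continuity of $\{\gamma_n\}$ together with the a.e.\ convergence (a) yields $f\in\lm$ and $\int_A f_n\,dm\to\int_A f\,dm$ uniformly in $A\in\Sigma$, after which norm convergence is read off from the equivalent norm $|||\cdot|||$. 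You instead re-prove the case of that theorem which is needed here: you first upgrade the Vitali--Hahn--Saks conclusion to uniform a.c.\ norm of $\{f_n\}$ in $\lm$ (the identity $|||f_n\chi_A|||=\sup_{B\subseteq A}\|\int_B f_n\,dm\|_X$ combined with monotonicity of $\eta$ on subsets is exactly the right observation), then use Egorov and the three-term splitting to get Cauchyness, invoke completeness of $\lm$ to produce a limit $g$, and identify $g=f$ through the contractive embedding $\lm\subseteq L^1(\eta)$ and a.e.\ convergence of a subsequence. What the paper's route buys is brevity, since the heavy lifting is a known theorem about the Bartle--Dunford--Schwartz integral; what your route buys is self-containedness --- beyond Vitali--Hahn--Saks you use only Egorov, the triangle inequality and completeness --- and it makes explicit where each hypothesis enters, something the paper's compressed phrasing (``the convergence in (b) is uniform with respect to the sets $A$'') leaves implicit, since that uniform-in-$A$ convergence to $\int_A f\,dm$ genuinely requires (a) and not just (b). As a by-product you also isolate a reusable fact: hypothesis (b) alone already forces $\{f_n\}$ to have uniformly a.c.\ norm in $\lm$.
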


\begin{proof}
For each $f_n\in\lm$, $n\in\N$,  the set function
$A\mapsto \int_Af_n\,dm\in X$, $A\in\Sigma$, is a
$\sigma$-additive measure (due to the Orlicz-Pettis Theorem),
which is absolutely continuous with respect to a control
measure for $m$. This fact, together with (b) implies, via the
Vitali-Hahn-Saks Theorem, \cite[Theorem I.5.6]{diestel-uhl}, that the convergence
in (b) is uniform with respect to the sets $A\in\Sigma$. Accordingly,
$f\in\lm$, \cite[Theorem 2.7]{bartle-dunford-schwartz}.
The convergence $f_n\to f$ in norm in $\lm$ follows directly by considering the equivalent
norm $|||\cdot|||$ in $\lm$.
\end{proof}


\section{The subsequence splitting property for $\lm$}\label{S-ssp}


In \cite[2.1 Definition]{weis} Weis gives a general
definition of the subsequence splitting property. Let $X$ be a B.f.s.\
with a.c.\ norm defined over a measure space $(S,\sigma,\mu)$. Then $X$
has the \textit{subsequence splitting property}  if for every
norm bounded sequence $\{f_n\}\subseteq X$ there is a subsequence $\{f'_{n}\}$ of $\{f_n\}$
and sequences $\{g_n\}$ and $\{h_n\}$ in $X$ such that:
\begin{itemize}
\item[(a)] For $n\in\N$ we have $f'_{n}=g_n+h_n$, with $g_n$ and $h_n$ having disjoint support.
\item[(b)]  The sequence $\{g_n\}$ has uniformly a.c.\ norm in $X$.
\item[(c)] The functions $\{h_n\}$ have pairwise disjoint support.
\end{itemize}

Weis gives several characterizations of the subsequence splitting property, \cite[2.5 Theorem]{weis}.
We select only those which are required in the sequel. Namely,
\begin{itemize}
\item[(i)] $X$ has the subsequence splitting property,
\item[(ii)] $\tilde X$ has a.c.\ norm,
\item[(iii)] $\tilde X$ does not contain a copy of $c_0$,
\end{itemize}
where the space $\tilde X$ is  constructed  as follows; see \cite{weis}.
Let $\mathcal{U}$ be a free ultrafilter in $\N$. Consider the ultraproduct
of $X$ via $\mathcal{U}$ given by the quotient
$$
X_{\mathcal{U}}:=\ell_\infty(X)\big/N_{\mathcal{U}},\quad\hbox{where}\quad
N_{\mathcal{U}}=\left\{\{f_n\}\in\ell_\infty(X) : \lim_{\mathcal{U}}\|
f_n\|_X=0\right\}
$$
and $\ell_\infty(X)$ is the space of all bounded sequences in $X$.
Denote by $[f_n]\in X_{\mathcal{U}}$ the equivalence class of the element
$\{f_n\}\in\ell_\infty(X)$. The space $X_{\mathcal{U}}$ becomes a Banach lattice for the following norm and
order:
\begin{equation*}
\|[f_n]\|_{\mathcal{U}}:=\lim_{\mathcal{U}}\|f_n\|_X,\quad \inf\{[f_n],[g_n]\}:=[\inf\{f_n,g_n\}].
\end{equation*}
For details on ultraproducts of Banach spaces see \cite{heinrich}.
Let $\chi_S$ be the characteristic function of the underlying set $S$. Then, $[\chi_S]$ is the
equivalence class of the constant sequence $\{\chi_S\}$. Let $\tilde X$ denote the
band  in $X_{\mathcal{U}}$  generated by  $[\chi_S]$, that is,
$\tilde X=[\chi_S]^{\perp\perp}$. Recall that a band $M$ in a Banach lattice $Z$ is a
closed subspace which is an order ideal (i.e., $f\in M$ and $g\in Z$ with $|g|\le |f|$ imply $g\in M$)
and  is closed under the formation of suprema, \cite[p.3]{lindenstrauss-tzafriri}.


\medskip

\begin{thm}\label{theorem4}
Let $X$ be  a B.f.s.\
and $m\colon\Sigma\to X$
be a $\sigma$-additive vector measure.
The following conditions are assumed to hold.
\begin{itemize}
\item[(a)] $X$ and $X^*$ have the subsequence splitting property.
\item[(b)] The range $m(\Sigma)$ of $m$ has uniformly a.c.\ norm in $X$.
\end{itemize}
Then, the B.f.s.\ $\lm$ has the subsequence splitting property.
\end{thm}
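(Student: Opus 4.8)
The plan is to verify the criterion of Weis recalled above. Since $\lm$ always has a.c.\ norm, it has the subsequence splitting property precisely when its band $\widetilde{\lm}\subseteq(\lm)_{\mathcal U}$ contains no copy of $c_0$ (equivalently, has a.c.\ norm), by the equivalence (i)$\iff$(iii). So the whole problem reduces to proving that $\widetilde{\lm}$ contains no copy of $c_0$.

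First I would lift the vector measure to the ultrapower level. Form $X_{\mathcal U}$ and its band $\tilde X=[\chi_S]^{\perp\perp}$, and set $\tilde m(A):=[m(A)]$ (the class of the constant sequence) for $A\in\Sigma$. Hypothesis (b) is exactly what guarantees that $\tilde m$ takes its values in the order-continuous band $\tilde X$ and that its range is uniformly a.c.\ there, while $\sigma$-additivity of $\tilde m$ in $\tilde X$ is inherited from that of $m$ in $X$. The technical heart of the argument is then to identify $\widetilde{\lm}$, as a Banach lattice, with the space $L^1(\tilde m)$ of functions integrable with respect to $\tilde m$ over the appropriate ultrapower measurable space. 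Here Lemma \ref{lewis} is the natural tool: it lets one pass to the limit in the defining integrals and recognise that a class in $\widetilde{\lm}$ is represented by a $\tilde m$-integrable function, and conversely. Carrying out this identification rigorously---most delicately, pinning down the enlarged measurable space on which $\tilde m$ lives and checking the isometry of norms---is where I expect the main difficulty to lie.

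Next I would show that $\tilde X$ contains no copy of $c_0$. By Weis applied to $X$, hypothesis (a) gives that $\tilde X$ has a.c.\ norm; applied to $X^*$, it gives that $\widetilde{X^*}$ has a.c.\ norm. Using the canonical duality between the ultrapower lattices, $\widetilde{X^*}$ realises the dual of $\tilde X$, so that the dual norm of $\tilde X$ is order continuous as well. A Banach lattice both of whose norm and dual norm are order continuous is reflexive \cite{meyer-nieberg}; hence $\tilde X$ is reflexive and, in particular, contains no copy of $c_0$. This is precisely the step that exploits the assumption on $X^*$, and it is what upgrades \lq\lq order continuous\rq\rq\ to \lq\lq $c_0$-free\rq\rq; I regard this ultrapower duality as the second genuine obstacle.

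Finally, since $\tilde X$ contains no copy of $c_0$, the $L^1$-space of the $\tilde X$-valued measure $\tilde m$ is weakly sequentially complete, and therefore contains no copy of $c_0$ either (see \cite{curbera1}). Through the identification of the second step, $\widetilde{\lm}$ then contains no copy of $c_0$, and Weis's criterion yields that $\lm$ has the subsequence splitting property. In summary, everything hinges on the measure-theoretic identification $\widetilde{\lm}\cong L^1(\tilde m)$ together with the ultrapower duality needed to conclude that $\tilde X$ is $c_0$-free; the remaining implications are then immediate from the cited results.
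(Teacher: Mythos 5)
Your proposal follows the same skeleton as the paper's proof (lift $m$ to an ultrapower measure $\tilde m$, embed the band $(\lm)\,\tilde{}\;$ into $L^1(\tilde m)$, finish with Weis's criterion), but two of its steps are genuinely defective. The first is your argument that $\tilde X$ contains no copy of $c_0$: it rests on the claim that a Banach lattice whose norm and dual norm are both order continuous must be reflexive, and that claim is false --- $c_0$ and its dual $\ell^1$ both have order continuous norm, yet $c_0$ is not reflexive (reflexivity requires $E$ and $E^*$ to be KB-spaces, which is strictly stronger than order continuity). The conclusion you want is nevertheless immediate, and without any duality: Weis's equivalence (i)$\iff$(iii), applied to $X$ itself, says that since $X$ has the subsequence splitting property, $\tilde X$ contains no copy of $c_0$; this is exactly what the paper does. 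Consequently your remark that this is \lq\lq precisely the step that exploits the assumption on $X^*$\rq\rq\ is also mistaken; the hypothesis on $X^*$ is needed elsewhere, namely inside the step you deferred.

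The second and more serious gap is that the step you postpone as \lq\lq the main difficulty\rq\rq\ is the actual mathematical content of the theorem, and the objects you set up cannot carry it. You define $\tilde m(A):=[m(A)]$ on $\Sigma$ by constant sequences. With that definition hypothesis (b) plays no role at all (a constant sequence trivially has uniformly a.c.\ norm, since $X$, having the subsequence splitting property, has a.c.\ norm, so $[m(A)]\in\tilde X$ automatically), and $L^1(\tilde m)$ would be a space of $\Sigma$-measurable functions on $\Omega$; such a space cannot receive $(\lm)\,\tilde{}\;$, whose elements are classes $[f_n]$ of sequences living over the enlarged measure space $(\tilde\Omega,\tilde\Sigma,\tilde\eta)$ arising from the identification of $(L^1(\eta))\,\tilde{}\;$ with $L^1(\tilde\eta)$. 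The measure must be defined on $\tilde\Sigma$, by $\tilde m(\{A_n\}):=[m(A_n)]$ for arbitrary sequences $\{A_n\}\subseteq\Sigma$; it is exactly here that (b) is used (via Weis's Proposition 1.5) to see that the values lie in $\tilde X$, and $\sigma$-additivity then requires a genuine argument (absolute continuity with respect to $\tilde\eta$) rather than being \lq\lq inherited\rq\rq. Moreover, Lemma \ref{lewis} is not the tool for the embedding --- the paper invokes it only in the proof of Theorem \ref{theorem5}. The embedding is obtained by showing that each $\tilde f=[f_n]\in(\lm)\,\tilde{}\;$ is scalarly $\tilde m$-integrable and then applying Lewis's theorem \cite[Theorem 5.1]{lewis2}, valid because $\tilde X$ is $c_0$-free; the scalar integrability is checked against functionals $\tilde g^*=[g_n^*]$ using the identification $(\tilde X)^*=(X^*)\,\tilde{}\;$ of \cite[Corollary 2.7]{weis} --- this is where the hypothesis on $X^*$ actually enters --- together with the Radon--Nikodym derivatives of the measures $g_n^*m$; and one still needs a two-sided norm estimate (via the equivalent norm $|||\cdot|||$) to get an order isomorphism onto the image. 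Once that embedding is in place your endgame does work, though it can be shortened: $L^1(\tilde m)$ has a.c.\ norm for any vector measure \cite[Theorem 1]{curbera1}, so $(\lm)\,\tilde{}\;$ has a.c.\ norm and Weis's condition (ii) concludes, with no need for weak sequential completeness.
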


\begin{proof}
Recall, since $X$ has the subsequence splitting property, that it has a.c.\ norm.
In order to prove the result we  construct an $\tilde X$-valued $\sigma$-additive measure $\tilde{m}$
with the property that  $(\lm)\,\tilde{}\;$ is order isomorphically contained  in the
B.f.s.\  $L^1(\tilde{m})$. A  general result asserts that every $L^1$-space  of a vector measure
has a.c.\ norm, \cite[Theorem 1]{curbera1},
and hence, $(\lm)\,\tilde{}\;$ has a.c.\ norm. Then, by the characterization (ii) recorded above,
it follows that $\lm$ has the subsequence splitting property.

Let $\eta$ be a Rybakov control measure for  $m$. Then, with continuous inclusions, we have
\begin{equation}\label{bfs}
L^\infty(\Omega,\Sigma,\eta)\subseteq\lm\subseteq L^1(\Omega,\Sigma,\eta).
\end{equation}
Fix a free ultrafilter $\mathcal{U}$  in $\N$.
Then the  ultraproduct of  $L^1(\Omega,\Sigma,\eta)$ via $\mathcal{U}$ can be identified as
$$
L^1(\Omega,\Sigma,\eta)_{\mathcal{U}}
= L^1(\tilde\Omega,\tilde\Sigma,\tilde\eta)\oplus\Delta' ,
$$
where $(\tilde\Omega,\tilde\Sigma,\tilde\eta)$ is a measure space and
the elements of $\Delta'$ are disjoint from $[\chi_\Omega]$; see  \cite[\S4]{dacunha-castelle-krivine1},
\cite{dacunha-castelle-krivine2}, \cite[\S3]{gonzalez-abejon}.
Thus, it follows that
$$
(L^1(\Omega,\Sigma,\eta))\,\tilde{}=  L^1(\tilde\Omega,\tilde\Sigma,\tilde\eta).
$$
The same procedure can be done with $L^\infty(\Omega,\Sigma,\eta)$.
This allows the identification of  $(\lm)\,\tilde{}\;$ with a function space by forming the
ultraproducts of the inclusions in \eqref{bfs}, namely
$$
L^\infty(\tilde\Omega,\tilde\Sigma,\tilde\eta)\subseteq
(\lm)\,\tilde{}\subseteq L^1(\tilde\Omega,\tilde\Sigma,\tilde\eta),
$$
with both inclusions being continuous.

The $\sigma$--algebra $\tilde\Sigma$ is isomorphic to the Boolean ring
$\{[\chi_{A_n}]:A_n\in\Sigma\}$ formed in the quotient space
$L^1(\Omega,\Sigma,\lambda)_{\mathcal{U}}$.
Thus, every measurable set $\tilde A\in\tilde\Sigma$ can be identified
with a sequence of sets $\{A_n\}$  with each $A_n\in\Sigma$, where two sequences
of measurable sets $\{A_n\}$ and $\{B_n\}$ are identified if $\lim_{\mathcal{U}} \eta(A_n\bigtriangleup
B_n)=0$. Here $A\triangle B$ denotes
the symmetric difference of two sets $A$ and $B$. The measure $\tilde\eta$ is then defined via
$$
\tilde A=\{A_n\}\in\tilde\Sigma\mapsto\tilde\eta(\tilde A):=\lim_{\mathcal{U}}
\eta(A_n)\in\R^+.
$$
A function $\tilde f$ in $L^1(\tilde\Omega,\tilde\Sigma,\tilde\eta)$
is an element $[f_n]$ in $L^1(\Omega,\Sigma,\eta)_{\mathcal{U}}$,
and the integral of $\tilde f$ over measurable sets with respect to $\tilde\eta$ is defined as
$$
\int_{\tilde A} \tilde f\,d\tilde\eta=\lim_{\mathcal{U}} \int_{A_n}
f_n\,d\eta,\quad \tilde A=\{A_n\}\in\tilde\Sigma.
$$
For further details, see \cite[\S5]{heinrich}.

We define a vector measure $\tilde{m}$ by
$$
\tilde A=\{A_n\}\in\tilde\Sigma\longmapsto\tilde{m}(\tilde
A)=[m(A_n)]\in X_{\mathcal{U}}.
$$
As $m(\Sigma)$ is a bounded subset of $X$ it is clear that
 $\tilde{m}$ is well defined. Moreover, $\tilde m$ is finitely additive,
\cite[p.322]{dacunha-castelle-krivine1}. To verify its $\sigma$-additivity, let $\varepsilon>0$.
As $m$ is absolutely
continuous with respect to $\eta$, there exists $\delta>0$ such that
if $\eta(A)<\delta$, then $\|m\|(A)<\varepsilon$. Let $\tilde
A=\{A_n\}\in\tilde\Sigma$ satisfy $\tilde\eta(\tilde A)<\delta$, that
is, $\lim_{\mathcal{U}}\eta(A_n)<\delta$. Then there exists $V\in \mathcal{
U}$ such that for every $n\in V$ we have $\eta(A_n)<\delta$. Thus,
for every $n\in V$ it follows that $\|m(A_n)\|\le\|m\|(A_n)<\varepsilon$.
So, $\|\tilde{m}(\tilde A)\|_{\mathcal{U}}<\varepsilon$.
Hence, $\tilde{m}$ is absolutely continuous with respect to $\tilde\eta$
from which we deduce that $\tilde{m}$ is $\sigma$-additive.

By hypothesis,  the range $m(\Sigma)$ of the measure $m$ has uniformly a.c.\ norm
in $X$. In order to show that  the measure $\tilde{m}$ actually
takes its values in $\tilde X\subseteq X_{\mathcal{U}}$
we use  \cite[1.5 Proposition]{weis} which asserts that if $\{f_n\}$
has uniformly a.c.\  norm in $X$, then $[f_n]\in \tilde X$. Let
$\tilde A=\{A_n\}\in\tilde\Sigma$. Then $\tilde{m}(\tilde A)=[m(A_n)]\in X_{\mathcal{U}}$.
But, $\{m(A_n)\}\subseteq m(\Sigma)$ which has uniformly a.c.\ norm. Hence,
$\tilde{m}(\tilde A)=[m(A_n)]\in \tilde X$.

Next, we prove that $(\lm)\,\tilde{}\;$ is contained in $L^1(\tilde{m})$.
To this aim, it suffices to show that each $\tilde f\in(\lm)\,\tilde{}\;$
is scalarly $\tilde{m}$-integrable. The reason for
this is two-fold. On the one hand, $\tilde X$ does not contain a copy of $c_0$ since
$X$ satisfies the subsequence splitting
property; see (iii) above. On the other hand,
for vector measures  with values in a Banach space not containing $c_0$,
integrability and scalar integrability coincide, \cite[Theorem 5.1]{lewis2}.

Since $X$ and $X^*$ satisfy the subsequence splitting
property, we have $(\tilde X)^*=(X^*)\,\tilde{}$ and the norms in both spaces
coincide, \cite[Corollary 2.7]{weis}. Hence, the elements of $(\tilde X)^*$ are
of the form $\tilde g^*=[g_n^*]$ for $\{g_n^*\}$ a
bounded sequence in $X^*$.

Fix $\tilde g^*\in(\tilde X)^*$. The scalar measure $\tilde g^*\tilde{m}\colon\tilde\Sigma\to\R$
is absolutely continuous with
respect to $\tilde\eta$ (since  $\tilde{m}$ is absolutely continuous with
respect to $\tilde\eta$). Thus, $\tilde g^*\tilde{m}$  has a
Radon--Nikodym derivative with respect to $\tilde\eta$. We denote it by
$h_{\tilde g^*}$; it belongs to  $L^1(\tilde\eta)$.

Let $\tilde A=\{A_n\}\in\tilde\Sigma$. Then,
\begin{eqnarray*}
\big\langle \tilde g^*,\tilde{m}(\tilde A)\big\rangle
   &=& \big\langle  [g_n^*],[m(A_n)]\big\rangle
= \lim_{\mathcal{U}}\big\langle g_n^*,m(A_n)\big\rangle
\\ &=& \lim_{\mathcal{U}}\int_{A_n}1\,d(g_n^*m) =\lim_{\mathcal{U}}\int_{A_n} h_{g_n^*}\,d\eta
= \int_{\tilde A} \tilde h\,d\tilde\eta,
\end{eqnarray*}
where $\tilde h:=[h_{g_n^*}]$ and $h_{g_n^*}\in L^1(\eta)$ is the Radon--Nikodym
derivative of the measure $g_n^*m$ with respect to $\eta$, for each $n\in\N$.
Hence, $h_{\tilde g^*}=[h_{g_n^*}]$.

Let now $\tilde f\in(\lm)\,\tilde{}\;$. Then  $\tilde f=[f_n]$ for  a bounded sequence $\{f_n\}$ in $\lm$, with
$\|\tilde f\|_{\mathcal{U}}=\lim_{\mathcal{U}}\|f_n\|_{\lm}$.
Accordingly,
\begin{eqnarray*}
\int |\tilde f|\, d|\tilde g^*\tilde{m}| &=& \int |\tilde f|\cdot|\tilde h_{\tilde g^*}|\, d\,\tilde\eta
= \lim_{\mathcal{U}} \int |f_n|\cdot| h_{g_n^*}|\,d\eta
\\ &=& \lim_{\mathcal{U}} \int |f_n|\, d|g_n^*m|
\le  \lim_{\mathcal{U}} \|f_n\|_{\lm}\cdot\|g_n^*\|_{X^*}
\\ &=& \|\tilde f\|_{\mathcal{U}}\cdot\|\tilde g^*\|_{\mathcal{U}} .
\end{eqnarray*}
Hence, $\tilde f$ is integrable with respect to $\tilde g^*\tilde{m}$.
It follows that $\tilde f$ is scalarly $\tilde{m}$-integrable and hence,
integrable with respect to the vector measure $\tilde{m}$. We also deduce
from the previous inequality that
$$
\|\tilde f\|_{L^1(\tilde{m})}\le \|\tilde f\|_{\mathcal{U}},\quad
\tilde f\in(\lm)\,\tilde{}\; .
$$

Let $\varepsilon>0$. By using the equivalent norm $|||\cdot|||$ in $\lm$,
we can select for every $n\in\N$, a measurable set $A_n$ such that
$$
\left\|\int_{A_n} f_n\,dm\right\|_X\ge \frac{1-\varepsilon}{2}\|f_n\|_{\lm}.
$$
Set $\tilde A:=\{A_n\}$ in $\tilde\Sigma$. Then
\begin{eqnarray*}
\|\tilde f\|_{L^1(\tilde{m})}  &\ge& \left\|\int_{\tilde A}\tilde f d\tilde{m}\right\|_{\tilde X}
= \lim_{\mathcal{U}} \left\|\int_{A_n} f_n\,dm\right\|_X
\\ &\ge& \frac{1-\varepsilon}{2} \lim_{\mathcal{U}} \| f_n\|_{\lm}
=  \frac{1-\varepsilon}{2}\| \tilde f\|_{\mathcal{U}} .
\end{eqnarray*}
Thus, the norm of $(\lm)\,\tilde{}\,$ and the norm of $L^1(\tilde{m})$
are equivalent on $\lm\,\tilde{}\;$. Hence, $(\lm)\,\tilde{}\,$ is order isomorphic to a
subspace of $L^1(\tilde{m})$ which completes the proof.
\end{proof}

Well known examples of B.f.s.' satisfying the subsequence splitting property include those  Orlicz spaces
satisfying the $\Delta_2$ condition, $q$--concave B.f.s.' for $q<\infty$, and
r.i.\ spaces not containing $c_0$, \cite{weis}.


\section{The weak Banach-Saks property for $\lm$}\label{S-wbsp}


In the following result we require the vector measure
$m\colon \Sigma\to X$ to be \textit{separable}. In analogy to the
scalar case, this means that the associated pseudometric space $(\Sigma,d_m)$ is separable, that is,
it contains a countable dense subset.
The pseudometric $d_m$ is given by
$$
d_m(A,B):=\|m\|(A\triangle B),\quad A,B\in\Sigma,
$$
where $\|m\|(\cdot)$ is the semivariation of $m$.
For $\eta$ a Rybakov control measure for $m$ (see the Preliminaries),
due to the mutual absolute continuity between  $\eta(\cdot)$ and $\|m\|(\cdot)$,
this it is equivalent to the pseudometric space
$(\Sigma, d_\eta)$ being separable, where  $d_\eta(A,B):=\eta(A\triangle B)$, for $A,B\in\Sigma$.
We point out that $m$ is separable precisely when the B.f.s.\ $\lm$ is separable, \cite{ricker2}.


\begin{thm}\label{theorem5}
Let $X$ be  a B.f.s.\ and $m\colon\Sigma\to X$ be a $\sigma$-additive vector measure.
The following conditions are assumed to hold.

\begin{itemize}
\item[(a)] $X$  has the weak Banach-Saks property.
\item[(b)] The measure $m$ is separable  and positive, i.e.,  $m(A)\in X^+$ for $A\in\Sigma$.
\item[(c)] $\lm$ has the subsequence splitting property.
\end{itemize}
Then, the B.f.s.\ $\lm$ has the weak Banach-Saks property.
\end{thm}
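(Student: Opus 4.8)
The plan is to reduce the problem, via the subsequence splitting property granted by (c), to two sequences of qualitatively different nature and to control each of them by transporting it into $X$ through the integration operator $\im$, where the weak Banach--Saks property of $X$ is available. So let $\{f_n\}\subseteq\lm$ be weakly null, hence norm bounded. First I would apply the subsequence splitting property to obtain, after passing to a subsequence, a decomposition $f_n=g_n+h_n$ with $g_n$ and $h_n$ disjointly supported, $\{g_n\}$ of uniformly a.c.\ norm, and $\{h_n\}$ pairwise disjointly supported. Since $\lm$ has a.c.\ norm, the uniformly a.c.\ set $\{g_n\}$ is relatively weakly compact, so after a further subsequence $g_n\to g$ weakly and hence $h_n\to-g$ weakly. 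As $\{h_n\}$ is disjoint and $\lm$ is order continuous, a weakly convergent disjoint sequence must have weak limit $0$ (project onto each band generated by an $h_k$ to see that $g$ is disjoint from every $h_k$, then norm $-g$ by a positive functional carried by the support of $g$); thus $g=0$ and both $\{g_n\}$ and $\{h_n\}$ are weakly null.

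The decisive simplification comes from positivity of $m$: for every $f\in\lm$ one has $\|f\|_{\lm}=\|\im(|f|)\|_X$, because $|x^*m|\le|x^*|\,m$ as positive measures and the supremum defining the norm is attained along positive functionals $x^*\ge0$. For the disjoint part this identity is lossless: since $|\frac1N\sum_k h_{n_k}|=\frac1N\sum_k|h_{n_k}|$, we get $\|\frac1N\sum_k h_{n_k}\|_{\lm}=\frac1N\|\sum_k\im(|h_{n_k}|)\|_X$. I would then show that $u_k:=\im(|h_{n_k}|)\in X^+$ is weakly null in $X$: a disjoint weakly null sequence in a Banach lattice has weakly null absolute values, and $\im$ is weak--weak continuous. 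Condition (a) applied to $\{u_k\}$ now yields a subsequence whose Cesàro means tend to $0$ in $X$, that is $\frac1N\sum_k h_{n_k}\to0$ in $\lm$.

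The part $\{g_n\}$ cannot be treated through absolute values, since cancellation is essential (for instance $|g_n|$ may converge weakly to a nonzero limit). Here I would use the equivalent norm $|||u|||=\sup_A\|\im(u\chi_A)\|_X$ and aim for $|||\frac1N\sum_k g_{n_k}|||\to0$. For each fixed $A$ the band projection $u\mapsto u\chi_A$ is weakly continuous on $\lm$, so $\{\im(g_n\chi_A)\}_n$ is weakly null in $X$; moreover the whole family $\{\im(g_n\chi_A):n\in\N,\ A\in\Sigma\}$ is relatively weakly compact, being the $\im$-image of the uniformly a.c.\ (hence relatively weakly compact) set $\{g_n\chi_A\}$. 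Using separability of $m$ from (b) I would fix a countable $d_m$-dense family $\{A_j\}\subseteq\Sigma$, apply the weak Banach--Saks property of $X$ successively to the weakly null sequences $\{\im(g_n\chi_{A_j})\}_n$, and diagonalize to get a single subsequence whose Cesàro means vanish for every $A_j$. For an arbitrary $A$ I would then choose $A_j$ with $d_m(A,A_j)$ small and bound $\|\im(g_n(\chi_A-\chi_{A_j}))\|_X\le\|g_n\chi_{A\triangle A_j}\|_{\lm}$, which is small uniformly in $n$ by uniform absolute continuity; intersecting the subsequences produced for the $g$- and $h$-parts finally gives the desired Cesàro-null subsequence of $\{f_n\}$.

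The main obstacle is precisely this uniform-in-$A$ control in the $g$-part. The weak Banach--Saks property delivers a Cesàro-null subsequence for each individual set, and a diagonal argument handles countably many sets, but passing from the countable dense family to all of $\Sigma$ requires the Cesàro decay over the dense family to be \emph{uniform}, which a bare diagonalization does not provide. I expect to resolve this by exploiting the relative weak compactness of the entire range $\{\int_A g_n\,dm\}$ together with the uniform absolute continuity of $\{g_n\}$: the approximation error is then uniformly small and, at a prescribed precision, only finitely many sets of the dense family are relevant, so they can be handled simultaneously. This should upgrade the pointwise (in $j$) Cesàro convergence to the uniform estimate $\sup_A\|\frac1N\sum_k\int_A g_{n_k}\,dm\|_X\to0$, completing the argument.
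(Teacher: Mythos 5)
Your splitting step, the verification that both pieces are weakly null, and your treatment of the disjoint part $\{h_n\}$ (positivity of $m$ giving $\|f\|_{\lm}=\|\im(|f|)\|_X$, disjointness giving $\big|\sum_k h_{n_k}\big|=\sum_k|h_{n_k}|$, then the weak Banach--Saks property of $X$) are essentially the paper's Steps 2--3; likewise, the diagonalization over a countable $d_\eta$-dense family and the approximation of an arbitrary $A$ by uniform a.c.\ norm reproduce the paper's Steps 5--6. But the obstacle you flag at the end is a genuine gap, and the fix you propose does not close it. What your argument yields is only \emph{setwise} convergence: $\|\int_A F_N\,dm\|_X\to 0$ for each fixed $A\in\Sigma$, where $F_N$ denotes the Ces\`aro means. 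This is strictly weaker than $\sup_{A\in\Sigma}\|\int_A F_N\,dm\|_X\to 0$, which is what the norm of $\lm$ requires: take $m=\lambda$ on $[0,1]$ (so $\lm=L^1([0,1])$) and $F_N=r_N$ the Rademacher functions; these are uniformly integrable, $\int_A r_N\,d\lambda\to 0$ for every fixed $A$, and yet $\sup_A\big|\int_A r_N\,d\lambda\big|=1/2$ for all $N$. Your proposed mechanism for the upgrade --- that \lq\lq at a prescribed precision only finitely many sets of the dense family are relevant\rq\rq\ --- fails because the measure algebra $(\Sigma,d_\eta)$ is separable but never totally bounded in nontrivial cases (the sets $R_n=\{r_n=1\}$ satisfy $\lambda(R_n\triangle R_m)=1/2$ for $n\ne m$, so no finite family is $\delta$-dense for $\delta<1/4$); and the relative weak compactness of $\{\int_A g_n\,dm : n\in\N,\,A\in\Sigma\}$ cannot convert setwise into uniform convergence, as the same Rademacher example shows.

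What is missing is an independent source of almost everywhere convergence, which is exactly how the paper closes the argument. Before diagonalizing, the paper applies Koml\'os' theorem in $L^1(\eta)$ ($\eta$ a Rybakov control measure) to $\{g_n\}$, producing a subsequence all of whose further subsequences have Ces\`aro means converging $\eta$-a.e.\ to a limit, which is then shown to be $0$; the diagonal subsequence for the dense family is extracted \emph{inside} this Koml\'os subsequence. With the two ingredients in hand --- $F_N\to 0$ a.e.\ and $\int_A F_N\,dm\to 0$ in $X$ for every $A\in\Sigma$ --- Lemma \ref{lewis} (a Vitali--Hahn--Saks argument, which is precisely what makes the setwise convergence uniform in $A$) gives $\|F_N\|_{\lm}\to 0$. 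This a.e.\ ingredient is what no compactness consideration will supply. A secondary point: \lq\lq intersecting\rq\rq\ the subsequences produced for the $g$- and $h$-parts is not legitimate as stated, since Ces\`aro convergence to zero is not inherited by subsequences; the paper handles this by working throughout with Szlenk's strengthened form \eqref{wBS-2} of the weak Banach--Saks property, which \emph{is} stable under passing to further subsequences, so that the conclusion for the $h$-part survives the later refinements made for the $g$-part.
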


\begin{proof}
We need to verify, for a given  weakly null sequence $\{f_n\}\subseteq\lm$, that  there exists
a subsequence $\{f'''_n\}\subseteq \{f_n\}$ whose arithmetic means converge to zero in the norm of $\lm$,
that is,
$$
\lim_{n\to\infty}\Big\|\frac{1}{n}\sum_{k=1}^n f'''_k\Big\|_{\lm} =0.
$$

The proof will be carried out in several steps.

\medskip

\textit{Step 1.} An important observation, which  Szlenk credits to Pe\l czy\'nski, \cite[Remarque 1]{szlenk},
is that the weak Banach-Saks property for a Banach space $Z$ is equivalent to the following
(a priori  stronger)  property:
for every weakly null sequence $\{z_n\}\subseteq Z$ there exists a subsequence
$\{z'_{n}\}\subseteq \{z_n\}$ satisfying
\begin{equation}\label{wBS-2}
\lim_{m\to\infty} \;\sup_{n_1<n_2<\cdots<n_m}\Big\|\frac{1}{m}\sum_{k=1}^m z'_{n_{k}}\Big\|_{Z} =0.
\end{equation}

It is to be remarked that this condition
is a  technical improvement: any further subsequence extracted from $\{z'_{n}\}$
again satisfies \eqref{wBS-2}, for that new subsequence.

\medskip

\textit{Step 2.} Let  $f_n\to0$ weakly in $\lm$. Then,
$\{f_n\}$ is a bounded sequence in $\lm$. Since  $\lm$ has the subsequence splitting
property, there is a subsequence $\{f'_n\}\subseteq \{f_n\}$ and
sequences $\{g_n\}$ and $\{h_n\}$ in $\lm$ such that
\begin{itemize}
\item[(a)] $f'_{n}=g_n+h_n$, with $g_n$ and $h_n$ having disjoint support, $n\in\N$.
\item[(b)]  $\{g_n\}$ has uniformly a.c.\ norm in $\lm$.
\item[(c)] $\{h_n\}$ have pairwise disjoint support.
\end{itemize}

Since $f_n\to0$ weakly in $\lm$, also $f'_n\to0$ weakly in $\lm$. The
claim is that (a), (b), (c) imply that both  $g_n\to0$ weakly in $\lm$ and $h_n\to0$ weakly in $\lm$.

To establish this claim, recall that sets of functions having uniformly a.c.\ norm are
relatively weakly compact (see the Preliminaries).
Thus, from  (b),  the set $\{g_n:n\in\N\}$ is a relatively weakly compact set in $\lm$.
By the Eberlein-\u{S}mulian Theorem, there is a subsequence
$\{g_{n_k}\}$ and $g\in\lm$ such that $g_{n_k}\to g$ weakly in $\lm$.
Since $f_{n_k}\to 0$ weakly in $\lm$, it follows that $h_{n_k}\to (-g)$ weakly in $\lm$.
Let $D_k$ denote the support of $h_{n_k}$; from (c) the sets $D_k$, $k\in\N$, are pairwise disjoint.
Set $E:=\cup_1^\infty D_k$ and  $E_j:=\cup_{1}^{j}D_k$.
Since $L^\infty\subseteq\lm^*$, we have $\chi_A\in\lm^*$ for every $A\in\Sigma$ .
Let $A\in\Sigma$ with $A\subseteq E^c$. Then, $
\langle\chi_A,h_{n_k}\rangle \to \langle\chi_A,(-g)\rangle$. But,
$\langle\chi_A,h_{n_k}\rangle=0$ for all $k\ge1$ and so $g=0$ a.e.\ on
$E^c$. Fix $j\in\N$. For any $A\in\Sigma$ with $A\subseteq E_j$ we have
$\langle\chi_A,h_{n_k}\rangle \to \langle\chi_A,(-g)\rangle$. But,
$\langle\chi_A,h_{n_k}\rangle=0$ for all $k>j$ and so $g=0$ a.e.\ on
$E_j$. Since this occurs for all $j\in\N$, it follows that $g=0$ a.e.\ on $E$. Consequently,
$g=0$ a.e.\ and so $g_{n_k}\to0$ weakly.
This argument shows that the sequence  $\{g_{n}\}$ has the property that, for each of its subsequences,
there is a further subsequence which converges weakly to zero.
This implies that the original sequence $g_{n}\to0$ weakly. Consequently, also $h_{n}\to0$ weakly.

\medskip

\textit{Step 3.} Consider the functions $\{h_n\}\subseteq \lm$ from Step 2. They have pairwise disjoint support.
Let $B_n$ be the support of $h_n$, for $n\in\N$, and $B$ be the complement of $\cup_n B_n$. Define
$$
F:= \chi_B+\sum_{n=1}^\infty \textrm{sign} (h_n)\chi_{B_n} ,
$$
where $\textrm{sign} (h_n)=h_n/|h_n|$ on $B_n$. The function $F$ is
measurable and satisfies $|F|\equiv1$. The operator $f\in\lm\mapsto f F\in\lm$ of multiplication by $F$ is a  linear isometric isomorphism
on $\lm$. Since $h_n\to0$ weakly in $\lm$ and $h_n F=|h_n|$, for $n\in\N$, it follows that $|h_n|\to0$ weakly in $\lm$.

Due to the continuity of the integration operator, it follows that $\int_\Omega |h_n|\,dm\to0$ weakly in $X$.
Since $X$ has the weak Banach-Saks property, there exists a subsequence
$\{h'_n\}\subseteq\{h_n\}$ such that
\begin{equation}\label{disjoint}
\lim_{n\to\infty}\Big\|\frac{1}{n}\sum_{k=1}^n \int_\Omega |h'_k|\,dm\Big\|_{X} =0.
\end{equation}
Due to the fact that the vector measure $m$ is positive we have
$$
\|f\|_{\lm}=\|\,|f|\,\|_{\lm}=\left\|\int_\Omega|f|\,dm\right\|_X,\quad  f\in\lm,
$$
\cite[Theorem 3.13]{okada-ricker-sanchez}. This, together with the fact
(due to the supports of the functions $h'_n$, $n\in\N$, being disjoint) that
$\sum_{k=1}^n |h'_k|=|\sum_{k=1}^n h'_k|$ for $n\in\N$ implies,  from \eqref{disjoint}, that
\begin{equation}\label{disjoint2}
\lim_{n\to\infty}\Big\|\frac{1}{n}\sum_{k=1}^n  h'_k\,dm\Big\|_{\lm}=
\lim_{n\to\infty}\Big\|\int_\Omega \bigg(\frac{1}{n}\sum_{k=1}^n |h'_k|\bigg)\,dm\Big\|_{X}=0.
\end{equation}

Note, in view of Step 1, that the above conclusion still holds if we replace
$\{h'_n\}$ by any subsequence $\{h''_n\}\subset\{h'_n\}$.

\medskip

\textit{Step 4.} Consider now the functions $\{g_n\}\subseteq \lm$ from Step 2.
Let $\{g'_{n}\}$ be the subsequence of $\{g_n\}$ corresponding to the
subsequence $\{h'_{n}\}\subseteq\{h_n\}$ from Step 3. Since
$g_n\to0$ weakly in $\lm$, also $g'_n\to0$ weakly in $\lm$

Let $\eta$ be a Rybakov control measure for $m$.
Since $\lm\subseteq L^1(\eta)$ continuously and $g'_n\to0$ weakly in $\lm$, we have that
$g'_n\to0$ weakly in $L^1(\eta)$. Due to  the well known  Koml\'os theorem, \cite[Theorem 1a]{komlos}, applied in $L^1(\eta)$
to the norm bounded sequence $\{g'_n\}$,  there exists a subsequence
$\{g''_n\}\subseteq\{g'_n\}$ and a function $g_0\in L^1(\eta)$ such that, for every further
subsequence $\{g'''_{n}\}\subseteq\{g''_n\}$, we have
\begin{equation*}
\lim_{n\to\infty} \frac{1}{n}\sum_{k=1}^n g'''_{k}(x) \to g_0(x),\quad \eta-a.e.
\end{equation*}

Since $g'_n\to 0$ weakly in $L^1(\eta)$, also $g''_n\to 0$ weakly in $L^1(\eta)$ and so its averages
$\frac{1}{n}\sum_{k=1}^n g''_{k}\to 0$ weakly in $L^1(\eta)$. Set $\xi_n:=\frac{1}{n}\sum_{k=1}^n g''_{k}\in L^1(\eta)$.
Then $\xi_n\to0$ weakly in $L^1(\eta)$ and $\xi_n\to g_0$ $\eta$-a.e. Combining the Egorov theorem and  the Dunford-Pettis
criterion for relative weak compactness in $L^1(\eta)$, \cite[Theorem 5.2.9]{albiac-kalton},
we deduce that $\xi_n\to g_0$ for the norm in $L^1(\eta)$ and so $g_0=0$.

Consequently, we have selected a subsequence $\{g''_n\}\subseteq\{g'_n\}$ with the property that, for  every
subsequence $\{g'''_{n}\}\subseteq\{g''_n\}$, we have
\begin{equation}\label{komlos}
\lim_{n\to\infty} \frac{1}{n}\sum_{k=1}^n g'''_{k}(x) \to 0,\quad m-a.e.
\end{equation}

\medskip

\textit{Step 5.} Due to the separability of the measure $m$, there exists
a sequence $\{A_n\}\subset\Sigma$ which is dense in the pseudometric space
$(\Sigma, d_\eta)$.

We start a diagonalization process. For notational convenience, let
$$
I_m(f,A):=\int_A f\,dm,\quad f\in \lm,\quad A\in\Sigma.
$$

Define $g^{(1)}_n:=g''_n$, $n\in\N$, where $\{g''_n\}$ is the sequence  obtained
in Step 4. Since $g^{(1)}_n\to0$ weakly in $\lm$ and the  operator of integration over $A_1$,
namely
$$
f\in\lm\mapsto I_m(f,A_1)=\int_{A_1} f\,dm\in X
$$
is continuous, it follows that $I_m(g^{(1)}_n,A_1)\to 0$ weakly in $X$. But, $X$ has the weak Banach-Saks property and so
there exists a subsequence of $\{I_m(g^{(1)}_n,A_1)\}$ satisfying the condition \eqref{wBS-2} in $X$.
We  denote that subsequence  by $\{I_m(g^{(2)}_n,A_1)\}$. In this way we have also
selected a subsequence $\{g^{(2)}_n\}\subseteq\{g^{(1)}_n\}$.

Next we apply the same procedure to  the subsequence $\{g^{(2)}_n\}$ and the set $A_2$ as follows.
Since $g^{(2)}_n\to0$ weakly in $\lm$ and  the  operator of integration over $A_2$, i.e.,
$$
f\in\lm\mapsto I_m(f,A_2)=\int_{A_2} f\,dm\in X
$$
is continuous, it follows that $I_m(g^{(2)}_n,A_2)\to 0$ weakly in $X$. But, $X$ has the weak Banach-Saks property
and so there exists a subsequence of $\{I_m(g^{(2)}_n,A_2)\}$ satisfying the condition \eqref{wBS-2} in $X$.
We denote that subsequence   by $\{I_m(g^{(3)}_n,A_2)\}$. In this way we have
selected a subsequence $\{g^{(3)}_n\}\subseteq\{g^{(2)}_n\}$.
Note,  from Step 1, that  $\{I_m(g^{(3)}_n,A_1)\}$ also satisfies the condition \eqref{wBS-2} in $X$.

For the general step, consider the subsequence $\{g^{(k)}_n\}\subseteq \{g^{(k-1)}_n\}$.
Since $g^{(k)}_n\to0$ weakly in $\lm$ and  the  operator of integration over $A_k$, i.e.,
$$
f\in\lm\mapsto I_m(f,A_k)=\int_{A_k} f\,dm\in X
$$
is continuous, it follows that $I_m(g^{(k)}_n,A_k)\to 0$ weakly in $X$. But,  $X$ has the weak Banach-Saks property
and so there exists a subsequence of $\{I_m(g^{(k)}_n,A_k)\}$ satisfying the condition \eqref{wBS-2}.
We denote that subsequence by $\{I_m(g^{(k+1)}_n,A_k)\}$. In this way we have also
selected a subsequence $\{g^{(k+1)}_n\}\subseteq\{g^{(k)}_n\}$.
Note,  from Step 1, that also $\{I_m(g^{(k+1)}_n,A_j)\}$ satisfies the condition \eqref{wBS-2} in $X$
for all $1\le j\le k$.

By defining $g'''_n:= g^{(n)}_n$, $ n\in\N$, we obtain a subsequence $\{g'''_n\}\subseteq\{g''_n\}$ satisfying
\begin{equation}\label{wBS-Aj}
\lim_{n\to\infty} \Big\|\frac{1}{n}\sum_{k=1}^n \int_{A_j} g'''_k\,dm\Big\|_{X} =0 ,\quad j=1,2,\dots .
\end{equation}
Set
$$
F_n:=\frac{1}{n}\sum_{k=1}^n  g'''_k,\quad n=1,2,\dots.
$$
Then, $\{F_n\}\subseteq\lm$ and we can write \eqref{wBS-Aj} as
\begin{equation}\label{wBS-Aj2}
\lim_{n\to\infty} \Big\|\int_{A_j} F_n\,dm\Big\|_{X} =0 ,\quad j=1,2,\dots .
\end{equation}

\medskip

\textit{Step 6.}  Since the functions $\{g_n\}$ have uniformly a.c.\ norm in $\lm$,
also  the functions $\{g'''_n\}\subseteq\{g_n\}$ have uniformly a.c.\ norm in $\lm$. Recall that  $\lm$ is
a B.f.s.\ over the finite measure space $(\Omega,\Sigma,\eta)$, where $\eta$ is the
Rybakov control measure in Step 4.
The uniform a.c.\ of the norm of $\{g'''_n\}$  in $\lm$ implies that
for every $\varepsilon>0$ there exists a $\delta>0$ such that
\begin{equation}\label{ac}
\eta(A)<\delta \Rightarrow \sup _n\big\|g'''_n\chi_A\big\|_{\lm}<\varepsilon.
\end{equation}

Our next objective is to extend the validity of $\eqref{wBS-Aj2}$ to an arbitrary measurable set $A\in\Sigma$.
So, fix $A\in\Sigma$ and let $\epsilon>0$. Select $\delta>0$ to satisfy \eqref{ac}.
Due to the separability of $(\Sigma, d_\eta)$ there exists $j\in\N$ such that
$\eta(A\triangle A_j)<\delta$.
Then,
\begin{eqnarray*}
\Big\| \int_{A} F_n\,dm\Big\|_{X}
&\le&
\Big\| \int_{A} F_n\,dm-\int_{A_j} F_n\,dm\Big\|_{X} + \Big\| \int_{A_j} F_n\,dm\Big\|_{X}
\\ &\le&
\frac{1}{n}\sum_{k=1}^n\Big\| \int_{A} g'''_k\,dm-\int_{A_j} g'''_k\,dm\Big\|_{X} + \Big\| \int_{A_j} F_n\,dm\Big\|_{X}
\\ &\le&
\frac{1}{n}\sum_{k=1}^n\big\| g'''_k\chi_{A\triangle A_j} \big\|_{\lm} + \Big\| \int_{A_j} F_n\,dm\Big\|_{X}
\\ &\le&
\varepsilon + \Big\| \int_{A_j} F_n\,dm\Big\|_{X},
\end{eqnarray*}
where we have used $|\chi_{A\setminus A_j}g'''_k|\le |\chi_{A\triangle A_j}g'''_k|$
and $\|\int_\Omega g\,dm\|_X\le \|g\|_{\lm}=\|\,|g|\,\|_{\lm}$ for $g\in\lm$.
Due to \eqref{wBS-Aj2},  the last term can be made smaller than $\varepsilon$ for $n\ge n_0$
and some $n_0\in\N$. Hence,
\begin{equation*}
\lim_{n\to\infty} \Big\|\int_{A} F_n\,dm\Big\|_{X} \to0 ,\quad A\in\Sigma .
\end{equation*}

Note that $\{g'''_n\}\subseteq\{g''_n\}$ implies, from \eqref{komlos}, that $F_n\to0$ a.e.
Consequently, we have a sequence $\{F_n\}$ in $\lm$ such that $F_n\to0$ a.e.\
and $\int_A F_n\,dm\to0$ in $X$, for each $A\in\Sigma$.
These two conditions, via Lemma \ref{lewis}, imply that $F_n\to0$ in $\lm$, that is
\begin{equation}\label{wBS-Aj3}
\lim_{n\to\infty}\Big\|\frac{1}{n}\sum_{k=1}^n g'''_k\Big\|_{\lm} =0.
\end{equation}

\medskip

\textit{Step 7.}  Let $\{h'''_{n}\}$ be the subsequence of $\{h_n\}$ corresponding to the
subsequence $\{g'''_{n}\}$ of $\{g_n\}$ from Step 6. For the subsequence  $f'''_{n}=g'''_n+h'''_n$,
$n\in\N$, of  $\{f_n\}$  it follows from  \eqref{disjoint2} and \eqref{wBS-Aj3} that
\begin{equation*}
\lim_{n\to\infty}\Big\|\frac{1}{n}\sum_{k=1}^n  f'''_k\,dm\Big\|_{\lm}= 0 .
\end{equation*}
This completes the proof.
\end{proof}

The combination of Theorems  \ref{theorem4}  and \ref{theorem5} renders the following result.

\begin{thm}\label{theorem6}
Let $X$ be  a B.f.s.\
and $m\colon\Sigma\to X$ be a  $\sigma$-additive vector measure.
The following conditions are assumed to hold.
\begin{itemize}
\item[(a)] $X$  has the weak Banach-Saks property.
\item[(b)] $X$ and $X^*$ have the subsequence splitting property.
\item[(c)] The measure $m$ is separable  and positive, i.e.,  $m(A)\in X^+$ for $A\in\Sigma$.
\item[(d)] The range  $m(\Sigma)$ of $m$ has  uniformly a.c.\  norm in $X$.
\end{itemize}
Then, the B.f.s.\  $\lm$ has both the subsequence splitting property and the weak Banach-Saks property.
\end{thm}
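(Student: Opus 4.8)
The plan is to deduce Theorem~\ref{theorem6} as an immediate combination of Theorems~\ref{theorem4} and \ref{theorem5}, since the hypotheses of the combined statement have been deliberately assembled to feed into those two results in sequence. The only content here is to verify that the conclusions chain correctly: the subsequence splitting property for $\lm$ is an \emph{input} to Theorem~\ref{theorem5}, so I must first establish it via Theorem~\ref{theorem4}, and only then invoke the weak Banach-Saks machinery.

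First I would apply Theorem~\ref{theorem4}. Its hypotheses are precisely (a) that $X$ and $X^*$ have the subsequence splitting property and (b) that $m(\Sigma)$ has uniformly a.c.\ norm in $X$. These are exactly conditions (b) and (d) of Theorem~\ref{theorem6}. Hence Theorem~\ref{theorem4} yields that $\lm$ has the subsequence splitting property. This gives both half of the desired conclusion and, crucially, the missing hypothesis needed for the next step.

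Next I would apply Theorem~\ref{theorem5}. Its three hypotheses are: (a) $X$ has the weak Banach-Saks property; (b) $m$ is separable and positive; and (c) $\lm$ has the subsequence splitting property. Conditions (a) and (b) here are conditions (a) and (c) of Theorem~\ref{theorem6}, while condition (c) of Theorem~\ref{theorem5} is exactly what was just produced by the application of Theorem~\ref{theorem4}. Therefore Theorem~\ref{theorem5} applies and gives that $\lm$ has the weak Banach-Saks property, completing the conclusion.

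There is no genuine obstacle in this proof; it is a bookkeeping argument confirming that the hypotheses match up and that the logical dependency runs in the right direction (splitting property first, then weak Banach-Saks). The substantive work has already been carried out in the proofs of the two constituent theorems---in particular the ultraproduct construction of Theorem~\ref{theorem4} and the multi-step splitting/Koml\'os/diagonalization argument of Theorem~\ref{theorem5}---so the present statement requires only that one check the implications compose, which they do by design.
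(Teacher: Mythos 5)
Your proposal is correct and is precisely the paper's own argument: the paper introduces Theorem~\ref{theorem6} with the single sentence that it is ``the combination of Theorems~\ref{theorem4} and \ref{theorem5},'' and your verification that hypotheses (b), (d) feed Theorem~\ref{theorem4}, whose conclusion then supplies the missing hypothesis (c) of Theorem~\ref{theorem5} alongside (a) and (c), is exactly the intended chaining. Nothing further is needed.
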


\medskip

We now turn to the

\begin{proof}[Proof of Theorem \ref{theorem1}]
We will deduce Theorem \ref{theorem1} from Theorem \ref{theorem6}.
We first define the relevant vector measure $m$ and verify that the
conditions (c), (d) of Theorem \ref{theorem6} are satisfied.
So, let
$$
m\colon A\in\Sigma\mapsto m(A):=T(\chi_A)\in X.
$$
It a well defined, finitely additive  measure (as $T$ is linear) with values in $X^+$ (as $T$ is positive).
For the $\sigma$-additivity of $m$,  let $\{A_n\}\subseteq\Sigma$ be pairwise disjoint sets.
Since $\chi_{\cup_1^nA_k}\uparrow\chi_{\cup_1^\infty A_k}$ and $T$ is positive,
it follows that $T(\chi_{\cup_1^nA_k})\uparrow T(\chi_{\cup_1^\infty A_k})$
in $X$. Since $X$ has a.c.\ norm (as it has the subsequence splitting property),
this implies that $T(\chi_{\cup_1^nA_k})$ converges to
$T(\chi_{\cup_1^\infty A_k})$ in the norm of $X$. Hence,
$\sum_1^nm(A_k)\to \sum_1^\infty m(A_k)$ in the norm of $X$, i.e.,
$m$ is $\sigma$-additive.

Next we verify condition (c) of Theorem \ref{theorem6}. The vector measure $m$ is
absolutely continuous with respect to the underlying
measure $\mu$. Indeed, if $\mu(A)=0$  for some $A\in\Sigma$, then $m(A)=T(\chi_A)=0$
(as $T$ is linear). Actually, for any $B\in\Sigma$ with $B\subseteq A$ we have
$\mu(B)=0$ and so $m(B)=0$. This implies that $\|m\|(A)=0$.  It follows
for any given  $\varepsilon>0$ that there is $\delta>0$ such that $\mu(A)<\delta$ implies $\|m\|(A)<\varepsilon$.
Since  $\mu$ is separable, there exists a countable set $\{A_j\}$ which is dense in $(\Sigma,d_\mu)$. For any $A\in\Sigma$
and $\varepsilon>0$, let $\delta>0$ be chosen as above. The separability of $(\Sigma,d_\mu)$ ensures there is $j\in\N$ such
that $\mu(A\triangle A_j)<\delta$ and so $\|m\|(A\triangle A_j)<\varepsilon$. Thus, $\{A_j\}$ is dense  in $(\Sigma,d_{m})$.
Hence,  $m$ is separable.

In order to verify condition (d)  of Theorem \ref{theorem6} note, for every
$A\in\Sigma$, that $0\le T(\chi_A)\le T(\chi_{[0,1]})$.
Then, for any $B\in\Sigma$, it follows that $0\le \chi_BT(\chi_A)\le \chi_BT(\chi_{[0,1]})$ and so
$\|\chi_BT(\chi_A)\|_X\le \|\chi_BT(\chi_{[0,1]})\|_X$.
Since $X$ has a.c.\ norm,  the function $T(\chi_{[0,1]})$ has a.c.\ norm in $X$.
So,  given $\varepsilon>0$ there is $\delta>0$
such that $\mu(B)<\delta$ implies that $\|\chi_BT(\chi_{[0,1]})\|_X<\varepsilon$. Then also
$\|\chi_BT(\chi_A)\|_X<\varepsilon$ for all $A\in\Sigma$, that is, the set $\{T(\chi_A):A\in\Sigma\}$
has uniformly a.c.\ norm in $X$. From $m(\Sigma)=\{T(\chi_A):A\in\Sigma\}$ it follows that
$m(\Sigma)$ has uniformly a.c.\ norm in $X$.

Theorem \ref{theorem6} now implies that  $\lm$ has both the subsequence splitting property
and the weak Banach-Saks property.
It remains to establish the equality between $\lm$ and the optimal domain $[T,X]$.
This is a general fact for optimal domains of kernel operators on spaces with
a.c.\ norm, \cite[Corollary 3.3]{curbera-ricker1}.
\end{proof}


\section{Applications}\label{S-examples}


We provide an application of Theorem \ref{theorem6} to function spaces
arising from convolution operators on groups. The proof of Corollary \ref{corollary2}
on functions spaces arising from kernel operators on $[0,1]$ is also presented.

\medskip

Let $G$ be a compact, metrizable, abelian group and $\lambda$ denote normalized Haar measure on $G$.
Let $\nu$ be any positive, finite Borel measure on $G$. Define a  vector measure
$m_\nu^{(p)}\colon\mathcal {B}(G)\to L^p(G)$, for each $1< p<\infty$, by convolution with $\nu$, i.e.,
$$
m_\nu^{(p)}(A):=\chi_A*\nu,\quad A\in\mathcal {B}(G).
$$
Note that the space $L^p(G)$ has a.c.\ norm, possesses the subsequence splitting property and has the
weak Banach-Saks property. Moreover, its dual space  $(L^p(G))^*=L^q(G)$, with $1/p+1/q=1$, also has
the subsequence splitting property. In addition,  the vector measure $m_\nu^{(p)}$ is clearly positive and separable
(as  the $\sigma$-algebra $\mathcal{B}(G)$ of Borel subsets of $G$ is countably generated).
Concerning the range of $m_\nu^{(p)}$
being uniformly a.c.\ in $L^p(G)$, it suffices  for this  range
to be relatively compact in $L^p(G)$ (see the Preliminares).
For $1<p<\infty$, this is the case precisely when
$\nu\in M_0( G)$, i.e., the Fourier-Stieltjes coefficients of $\nu$
vanish at infinity on the dual group  of $G$, \cite[Proposition 7.58]{okada-ricker-sanchez}.
In particular, this is so whenever $\nu\in L^1(G)$, that is, whenever $\nu$ has an integrable density with respect to
$\lambda$, i.e.,  $\nu=f\,d\lambda$ with $f\in L^1(G)$.
So,  Theorem \ref{theorem1}  implies that each of the  spaces
$$
L^1(m_\nu^{(p)})=\big\{f: \nu*|f|\in L^p(G)\big\},\quad \nu\ge0,\quad \nu\in M_0(G),
$$
\cite[pp.350--351]{okada-ricker-sanchez}, has the subsequence splitting property and weak Banach-Saks property.
It should be remarked  in the event that the measure $\nu\not\in L^p(G)$, then
the space $L^1(m_\nu^{(p)})$ described above is  situated strictly between $L^p(G)$
and $L^1(G)$, i.e.,
$$
L^p(G)\subsetneq L^1(m_\nu^{(p)}) \subsetneq L^1(G);
$$
see \cite[Proposition 7.83]{okada-ricker-sanchez} and the discussion following that result.
It is known that always $L^1(G)\subsetneq M_0(G)$, \cite[p.320]{okada-ricker-sanchez}.


\medskip

We now turn to the

\begin{proof}[Proof of Corollary \ref{corollary2}]
We verify that the conditions of Theorem \ref{theorem1} are satisfied.

The Lebesgue measure space $([0,1],\mathcal{M},\lambda)$ is separable. Moreover,
the operator $T_K$ defined by \eqref{tk} is linear and positive (as  the kernel $K\ge0$). To
verify that $T_K$ maps $\linf$ into $X$ note, for each $f\in\linf$,
that $|T(f)|\le T(|f|)\le \|f\|_\infty\, T(\chi_{[0,1]})$.
As the function $T(\chi_{[0,1]})$ belongs to $X$ by assumption, it follows that $T(f)\in\linf$.
So, $T\colon L^\infty([0,1])\to X$.

In the case when $X$ is reflexive,  neither  $X$ nor $X^*$ can contain a subspace isomorphic to $c_0$.
Accordingly, as both $X$ and $X^*$  are r.i., they  have the
subsequence splitting property, \cite[2.6 Corollary]{weis}.
\end{proof}

Corollary \ref{corollary2}  applies to many different situations, e.g.,
to the following kernels on $[0,1]$.
\begin{itemize}
\item[(i)] The Volterra kernel, $K(x,y):=\chi_{\Delta}(x,y)$ with $\Delta:=\{(x,y)\in[0,1]\times[0,1]: 0\le y\le x\}$.
\item[(ii)] The Riemann-Liouville fractional kernel, $K(x,y):=|x-y|^{\alpha-1}$ for $0<\alpha<1$.
\item[(iii)] The Poisson semigroup kernel, $K(x,y):=\arctan(y/x)$ for $x\not=0$ and $K(0,y)=\pi/2$.
\item[(iv)] The kernel  associated with Sobolev's inequality, $K(x,y):= y^{(1/n)-1} \chi_{[x,1]}(y)$ for $n\ge2$.
\item[(v)] The  Ces\`{a}ro kernel, $K(x,y):=(1/x)\chi_{[0,x]}(y)$.
\end{itemize}

All of these kernels generate positive operators on
$\linf$. The function $T_K(\chi_{[0,1]})$ belongs, in all
cases, to  $\linf$ and hence, to all r.i.\ spaces on $[0,1]$. In particular,
the function belongs to all r.i.\ spaces with a.c.\ norm.

In relation to condition (d) of Theorem \ref{theorem6},
let us comment on the range of the associated vector measures.
In the cases (i)--(iv), the range is, in fact,  relatively compact
in $C([0,1])$ and  hence, also in any r.i.\ space $X$; see
\cite[Example 4.25]{okada-ricker-sanchez} and the references given there.
In the case (v), the range is relatively compact in
any r.i.\ space   $X\not=\linf$,
\cite[Theorem 2.1]{curbera-ricker4}.


\bibliographystyle{amsplain}


\end{document}